\documentclass[oneside,reqno,english]{amsart}
\usepackage[T1]{fontenc}
\usepackage[utf8]{inputenc}
\usepackage{xcolor}
\usepackage{babel}
\usepackage{prettyref}
\usepackage{amstext}
\usepackage{amsthm}
\usepackage{amssymb}
\usepackage[pdfusetitle,
 bookmarks=true,bookmarksnumbered=false,bookmarksopen=false,
 breaklinks=false,pdfborder={0 0 0},pdfborderstyle={},backref=false,colorlinks=false]
 {hyperref}
\hypersetup{
 colorlinks=true,citecolor=blue,linkcolor=blue,linktocpage=true}

\makeatletter
\numberwithin{equation}{section}
\numberwithin{figure}{section}

\usepackage{prettyref}

\newrefformat{cor}{Corollary~\ref{#1}}
\newrefformat{subsec}{Section~\ref{#1}}
\newrefformat{lem}{Lemma~\ref{#1}}
\newrefformat{thm}{Theorem~\ref{#1}}
\newrefformat{sec}{Section~\ref{#1}}
\newrefformat{chap}{Chapter~\ref{#1}}
\newrefformat{prop}{Proposition~\ref{#1}}
\newrefformat{exa}{Example~\ref{#1}}
\newrefformat{tab}{Table~\ref{#1}}
\newrefformat{rem}{Remark~\ref{#1}}
\newrefformat{def}{Definition~\ref{#1}}
\newrefformat{fig}{Figure~\ref{#1}}
\newrefformat{claim}{Claim~\ref{#1}}
\newrefformat{assu}{Assumption~\ref{#1}}

\makeatother

\theoremstyle{plain}
\newtheorem{thm}{\protect\theoremname}[section]
\newtheorem{lem}[thm]{\protect\lemmaname}
\newtheorem{prop}[thm]{\protect\propositionname}
\theoremstyle{definition}
\newtheorem{example}[thm]{\protect\examplename}
\theoremstyle{remark}
\newtheorem{rem}[thm]{\protect\remarkname}
\theoremstyle{plain}
\newtheorem{cor}[thm]{\protect\corollaryname}
\providecommand{\corollaryname}{Corollary}
\providecommand{\examplename}{Example}
\providecommand{\lemmaname}{Lemma}
\providecommand{\propositionname}{Proposition}
\providecommand{\remarkname}{Remark}
\providecommand{\theoremname}{Theorem}

\begin{document}
\subjclass[2020]{Primary: 47B65; secondary: 15B48, 47A64, 47J25}
\title[Classification in Active Dimension 2 for WR Dynamics]{Classification in Active Dimension 2 for Weighted Residual Dynamics}
\begin{abstract}
We study weighted residual dynamics associated with a rank-one projection
in finite dimension. The iteration reduces, after finitely many steps,
to a nonlinear recursion on a stabilized active subspace. We prove
that this recursion can be classified when the active dimension is
two: either a transverse reducing direction persists unchanged, or
the coupled part collapses completely. As a consequence, we obtain
a description of the limit in the active two-dimensional case and
identify the threshold beyond which higher-dimensional behavior becomes
more flexible.
\end{abstract}

\author{James Tian}
\address{Mathematical Reviews, 535 W. William St, Suite 210, Ann Arbor, MI
48103, USA}
\email{james.ftian@gmail.com}
\keywords{weighted residual dynamics; shorted operators; positive operators;
rank-one projection; nonlinear recursion; active block; classification}

\maketitle
\tableofcontents{}

\section{Introduction}\label{sec:1}

Let $H$ be a Hilbert space, let $P=\left|u\left\rangle \right\langle u\right|$
be a rank-one orthogonal projection, and let $R\in B\left(H\right)_{+}$.
We consider the weighted residual (WR) map 
\[
\Phi_{P}\left(R\right):=R^{1/2}\left(I-P\right)R^{1/2},
\]
and the associated iteration 
\[
R_{0}:=R,\qquad R_{n+1}:=\Phi_{P}\left(R_{n}\right)\qquad\left(n\ge0\right).
\]
This map has appeared in our earlier work as a basic nonlinear operation
on the cone of positive operators. In \cite{tian2025wr} it was used
as the underlying rank-one residual update in a decomposition scheme
for positive operators, while in \cite{tian2025alt} it appeared as
the elementary step in an alternating weighted residual flow. The
purpose of the present paper is to examine the limit problem for the
single projection dynamics itself.

If $R$ commutes with $P$, then $\Phi_{P}\left(R\right)=\left(I-P\right)R\left(I-P\right)$,
so the update reduces to an ordinary compression. Outside the commuting
regime, however, the square-root conjugation changes the geometry
of the iteration in an essential way. The portion removed at step
$n$ is not determined by a fixed subspace alone, but by the interaction
between the current residual $R_{n}$ and the distinguished direction
$u$ through $R^{1/2}_{n}$. Thus the dynamics does not preserve affine
structure on the positive cone, does not come from a variational principle,
and is not simply a linear compression scheme. This nonlinearity is
what made the weighted residual map useful in earlier work, but it
is also what makes the exact limit harder to track. 

The first step in the analysis is to isolate the part of the operator
on which nontrivial dynamics can survive. In finite dimension, the
supports of the iterates form a decreasing chain and therefore stabilize
after finitely many steps. Once this happens, the iteration restricts
to a fixed active subspace $E$, while the maximal reducing part contained
in $u^{\perp}$ remains unchanged throughout. The problem is therefore
reduced to a nonlinear recursion on the stabilized active block.

A basic point is that the induced recursion on $E$ is not, in general,
another copy of the original one. If $u_{E}$ denotes the orthogonal
projection of $u$ onto $E$, then the compressed dynamics takes the
form 
\[
T_{n+1}=T^{1/2}_{n}\left(I_{E}-\left|u_{E}\left\rangle \right\langle u_{E}\right|\right)T^{1/2}_{n}.
\]
Even though $P$ is a rank-one projection on $H$, the operator 
\[
\left|u_{E}\left\rangle \right\langle u_{E}\right|
\]
need not be a projection on $E$. Thus the stabilized problem is another
weighted recursion, rather than a lower-dimensional repetition of
the original projection dynamics.

The main result of the paper is that this induced weighted recursion
can be classified completely when the active dimension is $2$. In
that case there is a sharp dichotomy. Either the transverse line already
reduces the initial active operator, in which case it persists unchanged,
or the active block is coupled, in which case the weighted residual
dynamics collapses completely on that block. As a consequence, one
obtains a description of the limit when $\dim E=2$.

This is the last rigid case. When the active dimension is larger,
the family of stationary points supported on the transverse subspace
becomes larger, and the limiting geometry is no longer forced into
a single pattern. Accordingly, the aim of this paper is not a higher-dimensional
classification, but a precise reduction to the active block together
with a complete analysis of the two-dimensional regime.

The closest classical background is the theory of shorted operators,
beginning with Kreĭn \cite{MR24574} and developed in the work of
Anderson et al. \cite{MR242573,MR287970,MR356949}; also see \cite{MR1465881,MR2234254,MR2573240,MR938493,MR852902}.
There the central object is the maximal positive operator dominated
by a given operator and supported on a prescribed subspace, and the
analysis is governed by operator order, support constraints, and variational
characterizations. The weighted residual dynamics studied here is
different in kind: it is a nonlinear transformation of the positive
cone, and even for a fixed rank-one projection the induced recursion
on the stabilized support is not, in general, another projection recursion. 

For broader context, the present work may be viewed against three
nearby themes. The finite-dimensional matrix background is the theory
of positive definite matrices and their order structure, congruences,
and matrix inequalities \cite{MR2284176}. A second nearby theme is
the study of products and iterative schemes associated with projections
and related Hilbert space dynamics \cite{MR2580440,MR2903120,MR3145756,MR4310540}.
A third one is the theory of operator means \cite{MR563399,MR401352}.
The weighted residual recursion considered here is related in spirit
to these lines of work, but does not seem to fall directly under any
of them.

The paper is organized as follows. \prettyref{sec:2} proves the reduction
to the stabilized active block. \prettyref{sec:3} studies the induced
weighted recursion on that block and develops the structural identities
needed for the asymptotic analysis. \prettyref{sec:4} gives a classification
in active dimension $2$.

\section{Reduction to the Stabilized Active Block}\label{sec:2}

Fix a rank-one orthogonal projection $P=\left|u\right\rangle \left\langle u\right|$.
For any $R\in B\left(H\right)_{+}$, set 
\[
R_{0}:=R,\qquad R_{n+1}:=R^{1/2}_{n}\left(I-P\right)R^{1/2}_{n}\qquad\left(n\ge0\right).
\]
Then $R_{n}-R_{n+1}=R^{1/2}_{n}PR^{1/2}_{n}\ge0$, so $\left(R_{n}\right)$
is monotone decreasing in the operator order. By Vigier's theorem
(see, for example, \cite{MR493419}), $\left(R_{n}\right)$ converges
in the strong operator topology: 
\[
R_{\infty}:=s\text{-}\lim_{n\to\infty}R_{n}\in B\left(H\right)_{+}.
\]
In finite dimension, this is equivalent to norm convergence. The purpose
of this paper is to determine how $R_{\infty}$ is selected by the
interaction of the initial datum $R$ with the distinguished direction
$u$.

We begin by isolating the part of the dynamics that can remain nontrivial
under iteration. The nonlinear WR dynamics has a basic monotonicity:
kernels grow and supports shrink. In finite dimension this forces
eventual stabilization of the support, and once that stabilization
occurs, the iteration restricts to a fixed subspace on which every
iterate is strictly positive. The original problem is therefore reduced,
after finitely many steps, to the analysis of a positive recursion
on a fixed active block.

This reduction separates the geometry coming from the projection $P$
from the geometry already frozen into the initial datum $R$. The
maximal reducing part contained in $u^{\perp}$ remains unchanged
throughout the iteration, while the complementary active component
carries all of the remaining dynamics. The purpose of this section
is to make that decomposition precise and to identify the induced
recursion on the stabilized support.
\begin{lem}
Assume $\dim H=2$ and $P$ has rank-$1$. Then $R_{n}$ converges
in norm to 
\[
R_{\infty}=\begin{cases}
0, & \text{if }PR\ne RP,\\[4pt]
R^{1/2}\left(I-P\right)R^{1/2}=\left(I-P\right)R\left(I-P\right), & \text{if }PR=RP.
\end{cases}
\]
\end{lem}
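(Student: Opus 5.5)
The plan is to avoid spectral computations. I will use the fact that in dimension $2$ the operator $I-P$ is itself rank one, so every iterate after the first has rank at most one. Write $I-P=\left|v\right\rangle \left\langle v\right|$, where $v$ is a unit vector with $v\perp u$. Then for any $S\in B\left(H\right)_{+}$,
\[
\Phi_{P}\left(S\right)=S^{1/2}\left|v\right\rangle \left\langle v\right|S^{1/2}=\left|S^{1/2}v\right\rangle \left\langle S^{1/2}v\right|.
\]
In particular $R_{1}=\left|w\right\rangle \left\langle w\right|$ with $w:=R^{1/2}v$. Convergence in norm is automatic in finite dimension, so only the value of the limit has to be identified.

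The next step is to iterate the rank-one form explicitly. Suppose $S=\left|w\right\rangle \left\langle w\right|$ with $w\neq0$, and write $\hat{w}=w/\left\Vert w\right\Vert$. Then $S^{1/2}=\left\Vert w\right\Vert \left|\hat{w}\right\rangle \left\langle \hat{w}\right|$. Hence
\[
\Phi_{P}\left(S\right)=\left\Vert w\right\Vert ^{2}\left|\left\langle \hat{w},v\right\rangle \right|^{2}\left|\hat{w}\right\rangle \left\langle \hat{w}\right|=\left(1-\left|\left\langle \hat{w},u\right\rangle \right|^{2}\right)S.
\]
By induction, $R_{n}=c^{\,n-1}R_{1}$ for $n\ge1$, where $c:=1-\left|\left\langle \hat{w},u\right\rangle \right|^{2}\in\left[0,1\right]$. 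So $R_{\infty}=0$ if $c<1$, which is the case $\left\langle w,u\right\rangle \neq0$. If $c=1$, which is the case $w\parallel v$, then $R_{\infty}=R_{1}$. If $w=0$, then already $R_{1}=0$.

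It remains to match these cases with commutation; this is the only real point of the proof. If $PR=RP$, then $v$ is an eigenvector of $R$, hence of $R^{1/2}$. So $w$ is a multiple of $v$, and $R_{1}=\left(I-P\right)R\left(I-P\right)$ is fixed, giving the second formula. Conversely, suppose $PR\neq RP$. Then $v$ is not an eigenvector of $R$, and therefore not an eigenvector of $R^{1/2}$, since $R$ and $R^{1/2}$ have the same eigenvectors. This rules out $w=0$ and $w\parallel v$. Because $H$ is two-dimensional and $u\perp v$, a nonzero $w$ not parallel to $v$ must satisfy $\left\langle w,u\right\rangle \neq0$. Hence $c<1$ and $R_{n}=c^{\,n-1}R_{1}\to0$ geometrically.

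I expect no serious obstacle. The step needing a little care is the converse direction just described, namely checking that noncommutation forces $\left\langle R^{1/2}v,u\right\rangle \neq0$. This follows from the shared eigenvectors of $R$ and $R^{1/2}$ together with the two-dimensionality of $H$.
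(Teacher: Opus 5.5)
Your proof is correct and follows essentially the paper's route. Like the paper, you write $I-P$ as a rank-one projection and observe that every iterate from $R_1$ on is a geometric multiple of $R_1$ with ratio $c=1-\left|\left\langle \hat w,u\right\rangle \right|^2$ (the paper's $\alpha=\left\langle v,(I-P)v\right\rangle$ under swapped names). You then identify $c=1$ (or $R^{1/2}v=0$) with $v$ being an eigenvector of $R$, i.e. with $PR=RP$. Your explicit handling of the case $R^{1/2}v=0$ in the converse direction is slightly more careful than the paper's.
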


\begin{proof}
Choose a unit vector $w$ spanning $\mathrm{ran}\left(I-P\right)$.
Then 
\[
I-P=\left|w\left\rangle \right\langle w\right|,\qquad R_{1}=R^{1/2}\left(I-P\right)R^{1/2}=|R^{1/2}w\left\rangle \right\langle R^{1/2}w|.
\]
If $R^{1/2}w=0$, then $R_{1}=0$, hence $R_{n}=0$ for all $n\ge1$.

Otherwise write $R_{1}=\lambda\left|v\left\rangle \right\langle v\right|$
with $\lambda>0$, $\left\Vert v\right\Vert =1$. Then $R^{1/2}_{1}=\sqrt{\lambda}\left|v\left\rangle \right\langle v\right|$,
so 
\[
R_{2}=R^{1/2}_{1}\left(I-P\right)R^{1/2}_{1}=\lambda\left\langle v,\left(I-P\right)v\right\rangle \left|v\left\rangle \right\langle v\right|.
\]
Inductively, 
\[
R_{n}=\lambda\alpha^{n-1}\left|v\left\rangle \right\langle v\right|\qquad\left(n\ge1\right),
\]
where 
\[
\alpha:=\left\langle v,\left(I-P\right)v\right\rangle \in\left[0,1\right].
\]
Hence $R_{n}\to0$ if $\alpha<1$, while $R_{n}\equiv R_{1}$ if $\alpha=1$.

Now 
\[
\alpha=1\iff Pv=0\iff v\in\mathrm{ran}\left(I-P\right)=\mathbb{C}w.
\]
Since $v$ is a scalar multiple of $R^{1/2}w$, this is equivalent
to 
\[
R^{1/2}w\in\mathbb{C}w,
\]
hence to $w$ being an eigenvector of $R$. As $R$ is selfadjoint,
this is equivalent to invariance of $\mathrm{ran}\left(I-P\right)$,
i.e. to 
\[
PR=RP.
\]
Thus, if $PR\ne RP$, then $\alpha<1$ and $R_{\infty}=0$. If $PR=RP$,
then 
\[
R_{1}=R^{1/2}\left(I-P\right)R^{1/2}=\left(I-P\right)R\left(I-P\right),
\]
and therefore $R_{n}\equiv R_{1}$ for all $n\ge1$. 
\end{proof}

\begin{lem}
Let $H$ be any Hilbert space and $P=\left|u\left\rangle \right\langle u\right|$.
Let $\mathcal{M}$ be the largest $R$-reducing subspace contained
in $u^{\perp}$. Then $\mathcal{M}$ reduces every $R_{n}$, and with
respect to $H=\mathcal{M}\oplus\mathcal{M}^{\perp}$, one has 
\[
R_{n}=R\restriction_{\mathcal{M}}\oplus S_{n}.
\]
In particular, 
\[
R_{\infty}=R\restriction_{\mathcal{M}}\oplus S_{\infty}.
\]
\end{lem}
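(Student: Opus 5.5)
We argue by induction on $n$, showing that $\mathcal{M}$ reduces $R_n$ and that $R_n\restriction_{\mathcal{M}}=R\restriction_{\mathcal{M}}$ for every $n$. Here $S_n$ is defined as the compression of $R_n$ to $\mathcal{M}^{\perp}$.

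First we note that $\mathcal{M}$ is well defined. The closed span of any family of closed $R$-reducing subspaces contained in $u^{\perp}$ is again closed, $R$-reducing and contained in $u^{\perp}$. Hence the largest such subspace exists. Let $Q$ be the orthogonal projection onto $\mathcal{M}$. The reducing property means $QR=RQ$, and the inclusion $\mathcal{M}\subseteq u^{\perp}$ means $PQ=QP=0$.

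The case $n=0$ is trivial. For the inductive step, suppose $QR_n=R_nQ$ and $R_nQ=RQ$. Since $R_n^{1/2}$ is a norm limit of polynomials in $R_n$, it also commutes with $Q$. Moreover, on $\mathcal{M}$ it agrees with $(R\restriction_{\mathcal{M}})^{1/2}$, by uniqueness of positive square roots applied to the restriction $R_n\restriction_{\mathcal{M}}=R\restriction_{\mathcal{M}}$. We then compute
\[
R_{n+1}Q=R_n^{1/2}(I-P)R_n^{1/2}Q=R_n^{1/2}(I-P)QR_n^{1/2}=R_n^{1/2}QR_n^{1/2}=R_nQ=RQ,
\]
using $PQ=0$. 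Similarly $QR_{n+1}=QR_n=QR$, by taking adjoints or repeating the computation on the left. Thus $Q$ commutes with $R_{n+1}$ and $R_{n+1}\restriction_{\mathcal{M}}=R\restriction_{\mathcal{M}}$. Because each $R_n$ is block diagonal with respect to $\mathcal{M}\oplus\mathcal{M}^{\perp}$, we get $R_n=R\restriction_{\mathcal{M}}\oplus S_n$ with $S_n:=R_n\restriction_{\mathcal{M}^{\perp}}\ge0$.

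For the limit, strong convergence $R_n\to R_\infty$ preserves commutation with $Q$, since $QR_nx\to QR_\infty x$ and $R_nQx\to R_\infty Qx$. It also preserves the equality $R_nQ=RQ$. So $R_\infty=R\restriction_{\mathcal{M}}\oplus S_\infty$, where $S_\infty$ is the strong limit of the decreasing sequence $S_n$ on $\mathcal{M}^{\perp}$.

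The only mildly delicate point is the square-root step: that $R_n^{1/2}$ commutes with $Q$ and restricts to $(R\restriction_{\mathcal{M}})^{1/2}$. This follows from the continuous functional calculus and the fact that the functional calculus is compatible with reducing direct sums. Beyond this, the lemma needs only the identity $(I-P)Q=Q$; maximality of $\mathcal{M}$ is not used here and only matters later.
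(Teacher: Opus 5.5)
Your proof is correct and follows essentially the same route as the paper. Both argue by induction that $\mathcal{M}$ reduces $R_n^{1/2}$, hence $R_{n+1}$, and that $I-P$ acts as the identity on $\mathcal{M}$, so $R_{n+1}\restriction_{\mathcal{M}}=R_n\restriction_{\mathcal{M}}$; both then pass to the strong limit. Writing this via the projection onto $\mathcal{M}$ is only a notational difference, and your remarks on why $\mathcal{M}$ exists and that maximality is unused are accurate points the paper leaves implicit.
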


\begin{proof}
Since $\mathcal{M}$ reduces $R=R_{0}$, it also reduces $R^{1/2}_{0}$.
Because $\mathcal{M}\subseteq u^{\perp}=\ker P$, one has 
\[
P\restriction_{\mathcal{M}}=0,\qquad\left(I-P\right)\restriction_{\mathcal{M}}=I_{\mathcal{M}}.
\]
We claim that $\mathcal{M}$ reduces every $R_{n}$ and that 
\[
R_{n}\restriction_{\mathcal{M}}=R\restriction_{\mathcal{M}}.
\]

This is clear for $n=0$. Assume it holds for some $n$. Since $\mathcal{M}$
reduces $R_{n}$, it also reduces $R^{1/2}_{n}$. Hence 
\[
R_{n+1}=R^{1/2}_{n}\left(I-P\right)R^{1/2}_{n}
\]
leaves both $\mathcal{M}$ and $\mathcal{M}^{\perp}$ invariant, so
$\mathcal{M}$ reduces $R_{n+1}$. Moreover, on $\mathcal{M}$, 
\begin{align*}
R_{n+1} & \restriction_{\mathcal{M}}=R^{1/2}_{n}\restriction_{\mathcal{M}}\left(I-P\right)\restriction_{\mathcal{M}}R^{1/2}_{n}\restriction_{\mathcal{M}}\\
 & =R^{1/2}_{n}\restriction_{\mathcal{M}}I_{\mathcal{M}}R^{1/2}_{n}\restriction_{\mathcal{M}}=R_{n}\restriction_{\mathcal{M}}=R\restriction_{\mathcal{M}}.
\end{align*}
By induction, the claim follows for all $n$.

Therefore, with respect to the orthogonal decomposition $H=\mathcal{M}\oplus\mathcal{M}^{\perp}$,
each $R_{n}$ is block diagonal of the form $R_{n}=R\restriction_{\mathcal{M}}\oplus S_{n}$,
where $S_{n}:=R_{n}\restriction_{\mathcal{M}^{\perp}}$. 

Passing to the limit gives $R_{\infty}=R\restriction_{\mathcal{M}}\oplus S_{\infty}$,
where $S_{\infty}\overset{s}{=}\lim_{n\to\infty}S_{n}$. 
\end{proof}

The next proposition shows that, in finite dimension, the support
subspaces $E_{n}=\mathrm{ran}\left(R_{n}\right)$ eventually stabilize.
It also identifies the induced weighted recursion on the limiting
support.
\begin{prop}
\label{prop:2-3} Assume $\dim H<\infty$. For each $n\ge0$, set
\[
K_{n}:=\ker R_{n},\qquad E_{n}:=K^{\perp}_{n}=\mathrm{ran}\left(R_{n}\right).
\]
Then the following hold. 
\begin{enumerate}
\item The kernels increase and the supports decrease: 
\[
K_{n}\subseteq K_{n+1},\qquad E_{n+1}\subseteq E_{n}\qquad\left(n\ge0\right).
\]
\item There exists $N\ge0$ and a subspace $E\subseteq H$ such that 
\[
E_{n}=E\qquad\left(n\ge N\right).
\]
\item For every $n\ge N$, the subspace $E$ reduces $R_{n}$, one has 
\[
R_{n}=0\oplus T_{n}
\]
with respect to the orthogonal decomposition 
\[
H=E^{\perp}\oplus E,
\]
where 
\[
T_{n}:=R_{n}\restriction_{E}\in B\left(E\right)_{+},
\]
and each $T_{n}$ is strictly positive on $E$. 
\item Let $u_{E}:=P_{E}u\in E$. Then for every $n\ge N$, 
\[
T_{n+1}=T^{1/2}_{n}\left(I_{E}-\left|u_{E}\left\rangle \right\langle u_{E}\right|\right)T^{1/2}_{n}.
\]
\item If $u_{E}=0$, then 
\[
T_{n+1}=T_{n}\qquad\left(n\ge N\right).
\]
If $u_{E}\ne0$, then 
\[
0<\left\Vert u_{E}\right\Vert <1,
\]
and 
\[
\det T_{n+1}=(1-\left\Vert u_{E}\right\Vert ^{2})\det T_{n}\qquad\left(n\ge N\right).
\]
Hence 
\[
\det T_{n}=(1-\left\Vert u_{E}\right\Vert ^{2})^{n-N}\det T_{N}\qquad\left(n\ge N\right),
\]
so in particular 
\[
\det T_{n}\to0.
\]
\end{enumerate}
\end{prop}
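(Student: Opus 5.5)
For (1), I will use the defining identity $R_{n+1}=R_n^{1/2}(I-P)R_n^{1/2}$. If $R_nx=0$, then $R_n^{1/2}x=0$, and so $R_{n+1}x=0$. This gives $K_n\subseteq K_{n+1}$; taking orthogonal complements (with $R_n$ selfadjoint) gives $E_{n+1}\subseteq E_n$. For (2), the dimensions $\dim E_n$ form a nonincreasing sequence of nonnegative integers bounded by $\dim H<\infty$. The sequence is therefore eventually constant, say from $N$ on. Nested subspaces of equal finite dimension coincide, so $E_n=E$ for all $n\ge N$. For (3), I will use that for a positive operator in finite dimension, $\ker R_n\oplus\mathrm{ran}\,R_n=H$ orthogonally and both summands reduce $R_n$. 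Hence $R_n=0\oplus T_n$. The restriction $T_n$ is injective on $E=\mathrm{ran}\,R_n$, and therefore strictly positive.

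For (4), let $n\ge N$. Since $E$ reduces $R_n$, it reduces $R_n^{1/2}=0\oplus T_n^{1/2}$. Write $J:E\to H$ for the inclusion. Then $R_{n+1}$ restricted to $E$ equals $T_n^{1/2}\,J^*(I-P)J\,T_n^{1/2}$, and $J^*(I-P)J=I_E-J^*|u\rangle\langle u|J=I_E-|u_E\rangle\langle u_E|$ with $u_E=P_Eu$. Because $E_{n+1}=E$, this restriction is exactly $T_{n+1}$. This gives the recursion.

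For (5), if $u_E=0$ the recursion reads $T_{n+1}=T_n^{1/2}T_n^{1/2}=T_n$. Suppose instead $u_E\neq0$. Clearly $\|u_E\|\le\|u\|=1$, so the real point is to exclude $\|u_E\|=1$. If $\|u_E\|=1$, then $u=u_E\in E$, and $I_E-|u\rangle\langle u|$ is singular on $E$, with kernel $\mathbb{C}u$. Then $T_{n+1}=T_n^{1/2}(I_E-|u\rangle\langle u|)T_n^{1/2}$ kills the nonzero vector $T_n^{-1/2}u\in E$. This contradicts the strict positivity of $T_{n+1}$ from (3), since $n+1\ge N$. Therefore $0<\|u_E\|<1$.

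Multiplicativity of the determinant on the finite-dimensional space $E$ then gives
\[
\det T_{n+1}=\det T_n\cdot\det\bigl(I_E-|u_E\rangle\langle u_E|\bigr),
\]
and $\det(I_E-|u_E\rangle\langle u_E|)=1-\|u_E\|^2$. This last value comes from the eigenvalues: $1-\|u_E\|^2$ on the line $\mathbb{C}u_E$ and $1$ on its complement in $E$. Induction gives $\det T_n=(1-\|u_E\|^2)^{n-N}\det T_N$. This tends to $0$ because $0\le1-\|u_E\|^2<1$.

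I expect the only nontrivial step to be showing $\|u_E\|<1$, and it follows from the support stabilization as explained above. The rest is routine bookkeeping about reducing subspaces and square roots.
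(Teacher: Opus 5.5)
Your proof is correct and follows the paper's argument for (1)--(4) and for the determinant identity in (5). The only variation is how you exclude $\|u_E\|=1$: you exhibit $T_n^{-1/2}u$ as a nonzero kernel vector of $T_{n+1}$, whereas the paper first proves $\det T_{n+1}=(1-\|u_E\|^2)\det T_n$ and then reads off $1-\|u_E\|^2=\det T_{n+1}/\det T_n>0$; both rest on the strict positivity of $T_{n+1}$ on the stabilized support.
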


\begin{proof}
If $x\in K_{n}$, then $R^{1/2}_{n}x=0$, hence $R_{n+1}x=R^{1/2}_{n}\left(I-P\right)R^{1/2}_{n}x=0$.
Thus $K_{n}\subseteq K_{n+1}$. Taking orthogonal complements gives
$E_{n+1}\subseteq E_{n}$. This proves (1).

Because $\dim H<\infty$, the descending chain $E_{0}\supseteq E_{1}\supseteq E_{2}\supseteq\cdots$
must eventually stabilize. Hence there exist $N\ge0$ and a subspace
$E\subseteq H$ such that $E_{n}=E$, $n\ge N$. This proves (2).

Fix $n\ge N$. Since $E=\mathrm{ran}\left(R_{n}\right)=\ker R^{\perp}_{n}$,
the subspace $E$ reduces $R_{n}$, and with respect to $H=E^{\perp}\oplus E$
one has 
\[
R_{n}=0\oplus T_{n},\qquad T_{n}:=R_{n}\restriction_{E}.
\]
Because $\ker\left(T_{n}\right)=\left\{ 0\right\} $ on the finite-dimensional
space $E$, the operator $T_{n}$ is invertible, hence strictly positive
on $E$. This proves (3).

Now let $n\ge N$, and let $x,y\in E$. Since $R^{1/2}_{n}$ vanishes
on $E^{\perp}$ and maps $E$ into $E$, one has $R^{1/2}_{n}\restriction_{E}=T^{1/2}_{n}$.
Therefore 
\[
\left\langle x,R_{n+1}y\right\rangle =\langle R^{1/2}_{n}x,\left(I-P\right)R^{1/2}_{n}y\rangle=\langle T^{1/2}_{n}x,\left(I-P\right)T^{1/2}_{n}y\rangle.
\]
Since $T^{1/2}_{n}x,T^{1/2}_{n}y\in E$, and $P_{E}$ acts as the
identity on $E$, one gets 
\[
\langle T^{1/2}_{n}x,\left(I-P\right)T^{1/2}_{n}y\rangle=\langle T^{1/2}_{n}x,\left(I_{E}-\left|u_{E}\left\rangle \right\langle u_{E}\right|\right)T^{1/2}_{n}y\rangle.
\]
Hence 
\[
T_{n+1}=T^{1/2}_{n}\left(I_{E}-\left|u_{E}\left\rangle \right\langle u_{E}\right|\right)T^{1/2}_{n}.
\]
This proves (4).

If $u_{E}=0$, then $I_{E}-\left|u_{E}\left\rangle \right\langle u_{E}\right|=I_{E}$,
so $T_{n+1}=T_{n}$, $n\ge N$. 

Assume now that $u_{E}\ne0$. Since $u_{E}=P_{E}u$, one has 
\[
0<\left\Vert u_{E}\right\Vert \le\left\Vert u\right\Vert =1.
\]
Taking determinants in the identity from (4) yields 
\[
\det T_{n+1}=\det T_{n}\cdot\det\left(I_{E}-\left|u_{E}\left\rangle \right\langle u_{E}\right|\right).
\]
Now $\left|u_{E}\left\rangle \right\langle u_{E}\right|$ is rank
one on $E$, with spectrum 
\[
\left\{ \left\Vert u_{E}\right\Vert ^{2},0,\dots,0\right\} ,
\]
hence 
\[
\det\left(I_{E}-\left|u_{E}\left\rangle \right\langle u_{E}\right|\right)=1-\left\Vert u_{E}\right\Vert ^{2}.
\]
Therefore 
\[
\det T_{n+1}=(1-\left\Vert u_{E}\right\Vert ^{2})\det T_{n}\qquad\left(n\ge N\right).
\]
Since each $T_{n}$ is strictly positive on the finite-dimensional
space $E$, one has $\det T_{n}>0$, $n\ge N$. Hence 
\[
1-\left\Vert u_{E}\right\Vert ^{2}=\frac{\det T_{n+1}}{\det T_{n}}>0,
\]
so $\left\Vert u_{E}\right\Vert <1$. Iterating gives 
\[
\det T_{n}=(1-\left\Vert u_{E}\right\Vert ^{2})^{n-N}\det T_{N}\qquad\left(n\ge N\right).
\]
Since $0<1-\left\Vert u_{E}\right\Vert ^{2}<1$, it follows that $\det T_{n}\to0$.
This proves (5). 
\end{proof}

By part (4) of \prettyref{prop:2-3}, the finite-dimensional problem,
after finitely many support drops, reduces to the recursion 
\begin{equation}
T_{n+1}=T^{1/2}_{n}\left(I_{E}-\left|u_{E}\left\rangle \right\langle u_{E}\right|\right)T^{1/2}_{n}\qquad\left(n\ge N\right)\label{eq:2-1}
\end{equation}
on the stabilized support $E$.

If $u_{E}=0$, then the recursion is stationary. If $u_{E}\ne0$,
then $0<\left\Vert u_{E}\right\Vert <1$, so the compressed rank-one
operator $\left|u_{E}\left\rangle \right\langle u_{E}\right|$ is
not a projection on $E$. Thus the stabilized dynamics is not, in
general, the original rank-one projection recursion in lower dimension.

The determinant decay in part (5) shows that the limit is singular,
but it does not determine its rank. The following example shows that
the limit on $E$ may in fact be zero. Thus the stabilized support
need not coincide with the support of the limit.
\begin{example}
\label{exa:2-4} Let $H=\mathbb{C}^{3}$ with orthonormal basis $\left\{ e_{0},e_{1},e_{2}\right\} $,
and set $E:=\mathrm{span}\left\{ e_{1},e_{2}\right\} $. Define 
\[
u:=\frac{1}{\sqrt{2}}e_{0}+\frac{1}{\sqrt{2}}e_{1},\qquad P:=\left|u\left\rangle \right\langle u\right|.
\]
Let 
\[
R_{0}:=0_{\mathbb{C}e_{0}}\oplus T_{0},\qquad T_{0}:=\begin{pmatrix}1 & 1\\
1 & 2
\end{pmatrix}
\]
with respect to the orthogonal decomposition $H=\mathbb{C}e_{0}\oplus E$. 

Then $R_{0}\in B\left(H\right)_{+}$, and the support is already stabilized
at $E$. The induced recursion on $E$ is given by \prettyref{eq:2-1},
where 
\begin{equation}
u_{E}=P_{E}u=\frac{1}{\sqrt{2}}e_{1}\ne0.\label{eq:2-2}
\end{equation}
Moreover, $T_{n}\to0$. Hence $T_{\infty}=0$. 

In particular, although $\mathrm{ran}\left(R_{n}\right)=E$ for every
$n\ge0$, one has $\mathrm{ran}\left(R_{\infty}\right)=\left\{ 0\right\} $.
Thus the eventual support of the iterates need not coincide with the
support of the limit.
\end{example}

\begin{proof}
Since $R_{0}$ vanishes on $\mathbb{C}e_{0}$ and is strictly positive
on $E$, one has 
\[
\ker R_{0}=\mathbb{C}e_{0},\qquad\mathrm{ran}\left(R_{0}\right)=E.
\]
Because $R^{1/2}_{0}$ also vanishes on $\mathbb{C}e_{0}$ and maps
$E$ into itself, the recursion $R_{n+1}=R^{1/2}_{n}\left(I-P\right)R^{1/2}_{n}$
preserves the decomposition $H=\mathbb{C}e_{0}\oplus E$ and reduces
to \prettyref{eq:2-1} with $u_{E}$ given by \prettyref{eq:2-2}.
Thus, with respect to the ordered basis $\left\{ e_{1},e_{2}\right\} $
of $E$, 
\[
I_{E}-\left|u_{E}\left\rangle \right\langle u_{E}\right|=\begin{pmatrix}\frac{1}{2} & 0\\
0 & 1
\end{pmatrix}.
\]

Write 
\[
T_{n}=\begin{pmatrix}a_{n} & b_{n}\\
b_{n} & d_{n}
\end{pmatrix},\qquad a_{0}=1,\quad b_{0}=1,\quad d_{0}=2.
\]
Since each $T_{n}$ is positive definite, define 
\[
y_{n}:=\frac{b_{n}}{d_{n}},\qquad z_{n}:=\frac{\sqrt{a_{n}d_{n}-b^{2}_{n}}}{d_{n}}.
\]
Then 
\[
\frac{a_{n}}{d_{n}}=y^{2}_{n}+z^{2}_{n}.
\]

For a positive $2\times2$ matrix 
\[
M=\begin{pmatrix}a & b\\
b & d
\end{pmatrix},
\]
one has the square-root formula 
\[
M^{1/2}=\frac{M+\sqrt{\det M}\,I}{\sqrt{\mathrm{tr}\left(M\right)+2\sqrt{\det M}}}.
\]
Applying this to 
\[
T_{n+1}=T^{1/2}_{n}\begin{pmatrix}\frac{1}{2} & 0\\
0 & 1
\end{pmatrix}T^{1/2}_{n},
\]
one obtains 
\[
y_{n+1}=y_{n}\frac{y^{2}_{n}+z^{2}_{n}+3z_{n}+2}{y^{2}_{n}+2z^{2}_{n}+4z_{n}+2},
\]
\[
z_{n+1}=\sqrt{2}z_{n}\frac{y^{2}_{n}+z^{2}_{n}+2z_{n}+1}{y^{2}_{n}+2z^{2}_{n}+4z_{n}+2},
\]
and 
\[
d_{n+1}=d_{n}\frac{y^{2}_{n}+2z^{2}_{n}+4z_{n}+2}{2\left(y^{2}_{n}+z^{2}_{n}+2z_{n}+1\right)}.
\]

We first show that $z_{n}\to0$. Set 
\[
f\left(y,z\right):=\sqrt{2}\frac{y^{2}+z^{2}+2z+1}{y^{2}+2z^{2}+4z+2}.
\]
Then 
\[
z_{n+1}=f\left(y_{n},z_{n}\right)z_{n}.
\]
A direct calculation gives 
\[
\frac{\partial f}{\partial z}=-\frac{2\sqrt{2}y^{2}\left(z+1\right)}{\left(y^{2}+2z^{2}+4z+2\right)^{2}}\le0,\qquad\frac{\partial f}{\partial y}=\frac{2\sqrt{2}y\left(z+1\right)^{2}}{\left(y^{2}+2z^{2}+4z+2\right)^{2}}\ge0.
\]
Since 
\[
y_{0}=\frac{1}{2},\qquad z_{0}=\frac{1}{2},
\]
and the recursion keeps $y_{n},z_{n}$ in the interval $\left(0,\frac{1}{2}\right]$,
it follows that 
\[
f\left(y_{n},z_{n}\right)\le f\left(\frac{1}{2},0\right)=\sqrt{2}\frac{1+\frac{1}{4}}{2+\frac{1}{4}}=\frac{5\sqrt{2}}{9}<1.
\]
Hence $z_{n+1}\le\frac{5\sqrt{2}}{9}z_{n}$, so $z_{n}\to0$ geometrically.
In particular, $\sum^{\infty}_{n=0}z_{n}<\infty$. 

Next, $y_{n}$ decreases to a positive limit. Indeed, 
\[
y_{n+1}=y_{n}\left(1-\frac{z^{2}_{n}+z_{n}}{y^{2}_{n}+2z^{2}_{n}+4z_{n}+2}\right),
\]
so $0<y_{n+1}<y_{n}$. Also, 
\[
\frac{z^{2}_{n}+z_{n}}{y^{2}_{n}+2z^{2}_{n}+4z_{n}+2}\le\frac{z_{n}\left(z_{n}+1\right)}{2\left(z_{n}+1\right)^{2}}\le\frac{z_{n}}{2},
\]
hence $y_{n+1}\ge y_{n}\left(1-\frac{z_{n}}{2}\right)$. Because $\sum^{\infty}_{n=0}z_{n}<\infty$,
the infinite product 
\[
\prod^{\infty}_{n=0}\left(1-\frac{z_{n}}{2}\right)
\]
converges to a positive number, so $y_{n}\downarrow y_{\infty}$ for
some $y_{\infty}>0$. 

Finally, we show that $d_{n}\to0$. From the recursion for $d_{n}$,
\[
1-\frac{d_{n+1}}{d_{n}}=\frac{y^{2}_{n}}{2\left(y^{2}_{n}+z^{2}_{n}+2z_{n}+1\right)}.
\]
Since $y_{n}\ge y_{\infty}>0$ and $0<z_{n}\le z_{0}=\frac{1}{2}$,
one gets 
\[
1-\frac{d_{n+1}}{d_{n}}\ge\frac{y^{2}_{\infty}}{2\left(y^{2}_{0}+z^{2}_{0}+2z_{0}+1\right)}=\frac{y^{2}_{\infty}}{5}.
\]
Therefore $d_{n+1}\le\left(1-\frac{y^{2}_{\infty}}{5}\right)d_{n}$,
so $d_{n}\to0$ exponentially.

Since $b_{n}=y_{n}d_{n}$, $a_{n}=\left(y^{2}_{n}+z^{2}_{n}\right)d_{n}$,
it follows that 
\[
a_{n}\to0,\qquad b_{n}\to0,\qquad d_{n}\to0.
\]
Hence $T_{n}\to0$ and therefore $T_{\infty}=0$. 
\end{proof}

\section{The Induced Weighted Recursion}\label{sec:3}

After the support stabilizes, the original projection recursion is
replaced by a new nonlinear iteration on the active block $E$, as
in \prettyref{eq:2-1}. Unlike the original rank-one projection $P=\left|u\left\rangle \right\langle u\right|$,
the compressed operator $\left|u_{E}\left\rangle \right\langle u_{E}\right|$
need not be a projection on $E$. Thus the stabilized dynamics is
not simply a lower-dimensional copy of the original recursion; it
is a weighted version of it. The aim of this section is to extract
structural information from that weighted recursion. We show that
the distinguished direction determined by $u_{E}$ always disappears
from the limit, while the orthogonal block evolves by a monotone rank-one
decrement. These identities explain both the rigidity visible in dimension
$2$ and the additional freedom that appears in higher dimensions.

Assume $u_{E}\ne0$, and write 
\begin{equation}
e:=\frac{u_{E}}{\left\Vert u_{E}\right\Vert },\qquad\tau:=\left\Vert u_{E}\right\Vert ^{2}\in\left(0,1\right).\label{eq:3-1}
\end{equation}
Then 
\begin{equation}
T_{n+1}=T^{1/2}_{n}\left(I_{E}-\tau\left|e\left\rangle \right\langle e\right|\right)T^{1/2}_{n}\qquad\left(n\ge0\right).\label{eq:3-2}
\end{equation}
Set 
\begin{equation}
Q:=I_{E}-\left|e\left\rangle \right\langle e\right|,\label{eq:3-3}
\end{equation}
so that $Q$ is the orthogonal projection of $E$ onto $e^{\perp}$. 
\begin{prop}[Asymptotic block decoupling relative to the defect direction]
\label{prop:3-1} Decompose $T_{n}$ with respect to $E=\mathbb{C}e\oplus e^{\perp}$
as 
\[
T_{n}=\begin{pmatrix}a_{n} & b^{*}_{n}\\
b_{n} & B_{n}
\end{pmatrix},
\]
where $a_{n}:=\left\langle e,T_{n}e\right\rangle \in\mathbb{R}_{+}$,
$b_{n}:=QT_{n}e\in e^{\perp}$, and $B_{n}:=QT_{n}Q\in B(e^{\perp})_{+}$.
Also set $y_{n}:=QT^{1/2}_{n}e\in e^{\perp}$. Then the following
hold. 
\begin{enumerate}
\item For every $n\ge0$, 
\[
a_{n+1}=\left(1-\tau\right)a_{n}+\tau\left\Vert y_{n}\right\Vert ^{2}.
\]
\item One has 
\[
\tau\sum^{\infty}_{n=0}\left\Vert y_{n}\right\Vert ^{2}=\mathrm{tr}\left(B_{0}-B_{\infty}\right)<\infty,
\]
where $B_{\infty}:=QT_{\infty}Q$. In particular, $y_{n}\to0$.
\item One has $a_{n}\to0$.
\item One has $b_{n}\to0$. Consequently, 
\[
T_{\infty}=\begin{pmatrix}0 & 0\\
0 & B_{\infty}
\end{pmatrix}
\]
with respect to $E=\mathbb{C}e\oplus e^{\perp}$. Equivalently, 
\[
T_{\infty}e=0,\qquad\mathrm{ran}\left(T_{\infty}\right)\subseteq e^{\perp}.
\]
\end{enumerate}
\end{prop}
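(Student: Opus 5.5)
Write $T_{n+1}=T_n^{1/2}(I_E-\tau|e\rangle\langle e|)T_n^{1/2}=T_n-\tau|T_n^{1/2}e\rangle\langle T_n^{1/2}e|$, so each step subtracts one rank-one positive operator. Split $x_n:=T_n^{1/2}e=\langle e,x_n\rangle e+y_n$ and put $c_n:=\langle e,T_n^{1/2}e\rangle\ge0$. Then $\|x_n\|^2=\langle e,T_ne\rangle=a_n$, so $c_n^2+\|y_n\|^2=a_n$.

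\textbf{Part (1).} Compressing the rank-one identity to $e$ gives
\[
a_{n+1}=a_n-\tau c_n^2 .
\]
Substituting $c_n^2=a_n-\|y_n\|^2$ yields $a_{n+1}=(1-\tau)a_n+\tau\|y_n\|^2$.

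\textbf{Part (2).} Compressing by $Q$ instead gives
\[
B_{n+1}=B_n-\tau|y_n\rangle\langle y_n| .
\]
Take traces and telescope: $\tau\sum_{k<n}\|y_k\|^2=\mathrm{tr}(B_0-B_n)$. Since $T_n\to T_\infty$ in norm (finite dimension), $B_n\to B_\infty=QT_\infty Q$. Letting $n\to\infty$ gives the identity, and finiteness follows from $\mathrm{tr}(B_0-B_\infty)\le\mathrm{tr}B_0$. Hence $y_n\to0$.

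\textbf{Part (3).} The recursion in (1) is a contraction with a vanishing forcing term. Iterating,
\[
a_n=(1-\tau)^na_0+\tau\sum_{k<n}(1-\tau)^{n-1-k}\|y_k\|^2 .
\]
Since $0<1-\tau<1$, the first term tends to $0$. The second term is a convolution of the summable geometric kernel with a null sequence, so it also tends to $0$; the standard split into $k<K$ and $k\ge K$ handles this. Alternatively, from $a_{n+1}\le(1-\tau)a_n+\tau\varepsilon$ for large $n$ one gets $\limsup a_n\le\varepsilon$.

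\textbf{Part (4).} Positivity of the $2\times2$ block form gives the Cauchy–Schwarz bound $\|b_n\|^2\le a_n\|B_n\|\le a_n\|T_0\|$, using $B_n\le T_n\le T_0$. Hence $b_n\to0$ by (3). Passing to the limit in the block matrix, $a_\infty=0$ and $b_\infty=0$, which gives the stated form of $T_\infty$. In particular $T_\infty e=0$, and since $T_\infty$ is self-adjoint, $\mathrm{ran}\,T_\infty\subseteq e^\perp$.

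The only delicate point is Part (3): the $a_n$-recursion by itself does not force decay. The argument depends on the trace budget of the complementary block in (2) to make the forcing term $\|y_n\|^2$ summable, and hence null. Everything else is direct compression of the rank-one decrement identity.
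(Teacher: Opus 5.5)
Your proposal is correct and follows the paper's proof essentially step for step. You use the same rank-one decrement $T_{n+1}=T_n-\tau|T_n^{1/2}e\rangle\langle T_n^{1/2}e|$ compressed to $e$ and to $e^{\perp}$, the same trace telescoping for (2), the same variation-of-constants formula for (3), and the same Cauchy--Schwarz bound for (4). One small wording fix: in (4) what you actually need is $\|B_n\|\le\|T_n\|\le\|T_0\|$ (or $B_n\le B_0$ on $e^{\perp}$), because read literally on $E$ the inequality $QT_nQ\le T_n$ is false in general.
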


\begin{proof}
Using \prettyref{eq:3-2}, one has 
\[
a_{n+1}=\left\langle e,T_{n+1}e\right\rangle =\langle T^{1/2}_{n}e,\left(I_{E}-\tau\left|e\left\rangle \right\langle e\right|\right)T^{1/2}_{n}e\rangle.
\]
Therefore 
\[
a_{n+1}=\Vert T^{1/2}_{n}e\Vert^{2}-\tau|\langle e,T^{1/2}_{n}e\rangle|^{2}.
\]
Now $a_{n}=\left\langle e,T_{n}e\right\rangle =\Vert T^{1/2}_{n}e\Vert^{2}$,
and since $T^{1/2}_{n}e=\langle e,T^{1/2}_{n}e\rangle e+y_{n}$, $y_{n}=QT^{1/2}_{n}e\in e^{\perp}$,
one gets $|\langle e,T^{1/2}_{n}e\rangle|^{2}=a_{n}-\left\Vert y_{n}\right\Vert ^{2}$,
and so 
\[
a_{n+1}=a_{n}-\tau(a_{n}-\left\Vert y_{n}\right\Vert ^{2})=\left(1-\tau\right)a_{n}+\tau\left\Vert y_{n}\right\Vert ^{2}.
\]
This proves (1).

Next, 
\[
B_{n+1}=QT_{n+1}Q=QT_{n}Q-\tau|QT^{1/2}_{n}e\left\rangle \right\langle QT^{1/2}_{n}e|=B_{n}-\tau\left|y_{n}\left\rangle \right\langle y_{n}\right|.
\]
Taking traces gives 
\[
\mathrm{tr}\left(B_{n}\right)-\mathrm{tr}\left(B_{n+1}\right)=\tau\left\Vert y_{n}\right\Vert ^{2}.
\]
Summing from $n=0$ to $m-1$ yields 
\[
\tau\sum^{m-1}_{n=0}\left\Vert y_{n}\right\Vert ^{2}=\mathrm{tr}\left(B_{0}\right)-\mathrm{tr}\left(B_{m}\right).
\]
Since $B_{m}\to B_{\infty}$ in norm (Vigier's theorem), letting $m\to\infty$
gives 
\[
\tau\sum^{\infty}_{n=0}\left\Vert y_{n}\right\Vert ^{2}=\mathrm{tr}\left(B_{0}-B_{\infty}\right)<\infty.
\]
In particular, $y_{n}\to0$. This proves (2).

Set $\rho:=1-\tau\in\left(0,1\right)$. Iterating the recurrence in
(1) gives 
\[
a_{n}=\rho^{n}a_{0}+\tau\sum^{n-1}_{k=0}\rho^{\,n-1-k}\left\Vert y_{k}\right\Vert ^{2}.
\]
The first term tends to $0$. Since $(\left\Vert y_{k}\right\Vert ^{2})\in\ell^{1}$,
the second term also tends to $0$. Hence $a_{n}\to0$. This proves
(3).

Finally, let $x\in e^{\perp}$. Since $T_{n}\ge0$, the Cauchy-Schwarz
inequality for the positive sesquilinear form $\left(x,z\right)\mapsto\left\langle x,T_{n}z\right\rangle $
gives 
\[
\left|\left\langle x,b_{n}\right\rangle \right|^{2}=\left|\left\langle x,T_{n}e\right\rangle \right|^{2}\le\left\langle x,T_{n}x\right\rangle \left\langle e,T_{n}e\right\rangle =\left\langle x,T_{n}x\right\rangle a_{n}.
\]
Since $0\le T_{n}\le T_{0}$, one has $\left\langle x,T_{n}x\right\rangle \le\left\Vert T_{0}\right\Vert \left\Vert x\right\Vert ^{2}$,
and therefore $\left|\left\langle x,b_{n}\right\rangle \right|^{2}\le\left\Vert T_{0}\right\Vert \left\Vert x\right\Vert ^{2}a_{n}$.
Taking the supremum over all $x\in e^{\perp}$ with $\left\Vert x\right\Vert =1$
yields $\left\Vert b_{n}\right\Vert ^{2}\le\left\Vert T_{0}\right\Vert a_{n}$.
Since $a_{n}\to0$, it follows that $b_{n}\to0$. This proves (4).

Because $T_{n}\to T_{\infty}$ in norm, $a_{n}\to0$, $b_{n}\to0$,
and $B_{n}\to B_{\infty}$, the block form of $T_{\infty}$ follows:
\[
T_{\infty}=\begin{pmatrix}0 & 0\\
0 & B_{\infty}
\end{pmatrix}.
\]
Equivalently, $T_{\infty}e=0$ and $\mathrm{ran}\left(T_{\infty}\right)\subseteq e^{\perp}$. 
\end{proof}

The preceding proposition shows that the distinguished direction always
disappears from the limit. This does not force complete collapse on
the stabilized support. At the opposite extreme, if the orthogonal
complement of the distinguished direction is already reducing for
the initial datum, then the weighted recursion leaves that transverse
block unchanged and only damps the defect direction.
\begin{prop}
\label{prop:3-2} Suppose that $e^{\perp}$ reduces $T_{0}$. Then
$e^{\perp}$ reduces every $T_{n}$, and with respect to $E=\mathbb{C}e\oplus e^{\perp}$
one has 
\[
T_{n}=\lambda_{n}\left|e\left\rangle \right\langle e\right|\oplus B_{0},
\]
where 
\[
\lambda_{0}=\left\langle e,T_{0}e\right\rangle ,\qquad\lambda_{n+1}=\left(1-\tau\right)\lambda_{n}.
\]
Hence 
\[
\lambda_{n}=\left(1-\tau\right)^{n}\lambda_{0},\qquad T_{\infty}=0\oplus B_{0}.
\]
In particular, if $B_{0}\ne0$, then the stabilized weighted recursion
does not collapse to zero. 
\end{prop}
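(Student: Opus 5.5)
The argument is an induction on $n$, using that square roots respect orthogonal direct sums.

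Since $e^{\perp}$ reduces $T_{0}$, so does $\mathbb{C}e$. With respect to $E=\mathbb{C}e\oplus e^{\perp}$ the off-diagonal block $b_{0}$ vanishes, and
\[
T_{0}=\lambda_{0}\left|e\left\rangle \right\langle e\right|\oplus B_{0},\qquad\lambda_{0}=\left\langle e,T_{0}e\right\rangle .
\]
The inductive hypothesis is that $T_{n}=\lambda_{n}\left|e\left\rangle \right\langle e\right|\oplus B_{0}$ with $\lambda_{n}\ge0$.

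For the inductive step, note that the positive square root of a block-diagonal positive operator is the direct sum of the square roots of the blocks. By uniqueness of the positive square root, $\sqrt{\lambda_{n}}\left|e\left\rangle \right\langle e\right|\oplus B_{0}^{1/2}$ is positive and squares to $T_{n}$, so
\[
T_{n}^{1/2}=\sqrt{\lambda_{n}}\left|e\left\rangle \right\langle e\right|\oplus B_{0}^{1/2}.
\]
The weight $I_{E}-\tau\left|e\left\rangle \right\langle e\right|$ is also block diagonal in the same decomposition, namely $(1-\tau)\oplus I_{e^{\perp}}$. All three factors in \eqref{eq:3-2} are therefore diagonal, and the product is computed blockwise:
\[
T_{n+1}=(1-\tau)\lambda_{n}\left|e\left\rangle \right\langle e\right|\oplus B_{0}^{1/2}I_{e^{\perp}}B_{0}^{1/2}=\lambda_{n+1}\left|e\left\rangle \right\langle e\right|\oplus B_{0},
\]
with $\lambda_{n+1}=(1-\tau)\lambda_{n}\ge0$. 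This closes the induction and shows that $e^{\perp}$ reduces every $T_{n}$. The same conclusion could be read off from the identity $B_{n+1}=B_{n}-\tau\left|y_{n}\left\rangle \right\langle y_{n}\right|$ in the proof of \prettyref{prop:3-1}, since here $y_{n}=QT_{n}^{1/2}e=0$; the direct computation is cleaner, so I would present that.

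Iterating the recursion gives $\lambda_{n}=(1-\tau)^{n}\lambda_{0}$. Because $\tau\in(0,1)$, we have $\lambda_{n}\to0$, and hence $T_{n}\to0\oplus B_{0}$ in norm, so $T_{\infty}=0\oplus B_{0}$. This agrees with \prettyref{prop:3-1}, which already forces $T_{\infty}e=0$. In particular, if $B_{0}\ne0$ then $T_{\infty}\ne0$, so the recursion does not collapse.

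There is no genuine obstacle. The only point that needs a justification is the block-diagonal form of $T_{n}^{1/2}$, and that is supplied by uniqueness of the positive square root as above.
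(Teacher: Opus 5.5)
Your proposal is correct and follows the paper's proof essentially line for line: an induction on $n$ using $T_{n}^{1/2}=\sqrt{\lambda_{n}}\,|e\rangle\langle e|\oplus B_{0}^{1/2}$ and $I_{E}-\tau|e\rangle\langle e|=(1-\tau)|e\rangle\langle e|\oplus I_{e^{\perp}}$, a blockwise product giving $\lambda_{n+1}=(1-\tau)\lambda_{n}$, and then $\lambda_{n}=(1-\tau)^{n}\lambda_{0}\to0$. Your appeal to uniqueness of the positive square root justifies a step the paper states without comment.
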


\begin{proof}
Since $e^{\perp}$ reduces $T_{0}$, one has $T_{0}=\lambda_{0}\left|e\left\rangle \right\langle e\right|\oplus B_{0}$
with respect to $E=\mathbb{C}e\oplus e^{\perp}$, where 
\[
\lambda_{0}=\left\langle e,T_{0}e\right\rangle \ge0,\qquad B_{0}:=QT_{0}Q\in B\left(e^{\perp}\right)_{+}.
\]
We claim that for every $n\ge0$, 
\[
T_{n}=\lambda_{n}\left|e\left\rangle \right\langle e\right|\oplus B_{0}
\]
for some scalar $\lambda_{n}\ge0$.

This is clear for $n=0$. Assume it holds for some $n$. Since the
decomposition is reducing for $T_{n}$, it is also reducing for $T^{1/2}_{n}$,
and 
\[
T^{1/2}_{n}=\sqrt{\lambda_{n}}\left|e\left\rangle \right\langle e\right|\oplus B^{1/2}_{0}.
\]
Moreover, 
\[
I_{E}-\left|u_{E}\left\rangle \right\langle u_{E}\right|=I_{E}-\tau\left|e\left\rangle \right\langle e\right|=\left(1-\tau\right)\left|e\left\rangle \right\langle e\right|\oplus I_{e^{\perp}}.
\]
Therefore 
\[
T_{n+1}=T^{1/2}_{n}\left(I_{E}-\tau\left|e\left\rangle \right\langle e\right|\right)T^{1/2}_{n}=\left(1-\tau\right)\lambda_{n}\left|e\left\rangle \right\langle e\right|\oplus B_{0}.
\]
Thus the claim holds with $\lambda_{n+1}=\left(1-\tau\right)\lambda_{n}$.
By induction, $\lambda_{n}=\left(1-\tau\right)^{n}\lambda_{0}$, and
hence 
\[
T_{n}=\left(1-\tau\right)^{n}\lambda_{0}\left|e\left\rangle \right\langle e\right|\oplus B_{0}\to0\oplus B_{0}.
\]
Therefore $T_{\infty}=0\oplus B_{0}$. 
\end{proof}

\begin{rem}
Thus the weighted recursion already exhibits two opposite behaviors
on the stabilized support: complete collapse may occur, as in \prettyref{exa:2-4},
while nonzero transverse mass may persist unchanged when the transverse
subspace is reducing. The remaining difficulty lies in the  coupled
case, where $e^{\perp}$ does not reduce $T_{0}$.
\end{rem}

The block-decoupling identity (\prettyref{prop:3-1}) controls the
disappearance of the distinguished direction in the limit, but it
does not quantify how singularity develops. A complementary global
identity appears on the inverse side: once the active block is strictly
positive, the inverse dynamics becomes additive, with a rank-one increment
at each step. This yields a monotone blow-up mechanism for $\mathrm{tr}\left(T^{-1}_{n}\right)$
and a quantitative decay estimate for the smallest eigenvalue.
\begin{prop}
\label{prop:3-4} Consider \prettyref{eq:3-1}--\prettyref{eq:3-3}
with $T_{0}\in B\left(E\right)_{+}$ strictly positive. Then each
$T_{n}$ is strictly positive, and 
\[
T^{-1}_{n+1}=T^{-1}_{n}+\frac{\tau}{\rho}|T^{-1/2}_{n}e\left\rangle \right\langle T^{-1/2}_{n}e|.
\]
Set $\beta_{n}:=\left\langle e,T^{-1}_{n}e\right\rangle $ and $s_{n}:=\mathrm{tr}\left(T^{-1}_{n}\right)$.
Then 
\[
\beta_{n+1}-\beta_{n}=\frac{\tau}{\rho}|\langle e,T^{-1/2}_{n}e\rangle|^{2},
\]
and 
\[
s_{n+1}-s_{n}=\frac{\tau}{\rho}\beta_{n}.
\]

In particular, 
\[
\beta_{n}\ge\beta_{0}+n\frac{\tau}{\rho\left\Vert T_{0}\right\Vert }\qquad\left(n\ge0\right),
\]
and hence 
\[
s_{n}\ge s_{0}+\frac{\tau}{\rho}n\beta_{0}+\frac{\tau^{2}}{2\rho^{2}\left\Vert T_{0}\right\Vert }n\left(n-1\right)\qquad\left(n\ge0\right).
\]
Consequently, 
\[
\lambda_{\min}\left(T_{n}\right)\le\frac{\dim E}{s_{n}}\le\frac{C}{n^{2}}\qquad\left(n\ge1\right),
\]
for some constant $C>0$ depending only on $T_{0}$ and $\tau$. 
\end{prop}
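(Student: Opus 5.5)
The plan is to invert the recursion \prettyref{eq:3-2} directly and read off every claim from a Sherman--Morrison identity. First, strict positivity of every $T_{n}$ follows by induction. If $T_{n}$ is invertible, then $T_{n+1}=T_{n}^{1/2}(I_{E}-\tau|e\rangle\langle e|)T_{n}^{1/2}$ is a product of three invertible operators, because $I_{E}-\tau|e\rangle\langle e|$ has eigenvalues $1-\tau=\rho>0$ and $1$. So $T_{n+1}$ is invertible and positive, hence strictly positive on the finite-dimensional space $E$. Taking inverses gives
\[
T_{n+1}^{-1}=T_{n}^{-1/2}\bigl(I_{E}-\tau|e\rangle\langle e|\bigr)^{-1}T_{n}^{-1/2}.
\]
The rank-one inverse formula $\bigl(I_{E}-\tau|e\rangle\langle e|\bigr)^{-1}=I_{E}+\frac{\tau}{1-\tau}|e\rangle\langle e|$ can be checked directly from $\|e\|=1$. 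Substituting it yields $T_{n+1}^{-1}=T_{n}^{-1}+\frac{\tau}{\rho}|T_{n}^{-1/2}e\rangle\langle T_{n}^{-1/2}e|$.

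The two increment identities follow by pairing and tracing. Taking $\langle e,\cdot\,e\rangle$ of the identity gives $\beta_{n+1}-\beta_{n}=\frac{\tau}{\rho}|\langle e,T_{n}^{-1/2}e\rangle|^{2}$. Taking the trace gives $s_{n+1}-s_{n}=\frac{\tau}{\rho}\|T_{n}^{-1/2}e\|^{2}=\frac{\tau}{\rho}\langle e,T_{n}^{-1}e\rangle=\frac{\tau}{\rho}\beta_{n}$.

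For the linear lower bound on $\beta_{n}$, I would use monotonicity: $0\le T_{n}\le T_{0}$, as noted at the start of \prettyref{sec:2}. Hence $T_{n}\le\|T_{0}\|I_{E}$, so the spectrum of $T_{n}$ lies in $(0,\|T_{0}\|]$. This gives $T_{n}^{-1/2}\ge\|T_{0}\|^{-1/2}I_{E}$ by the spectral theorem applied to $T_{n}$ alone, so no operator monotonicity is needed. Therefore $\langle e,T_{n}^{-1/2}e\rangle\ge\|T_{0}\|^{-1/2}$, and its square is at least $1/\|T_{0}\|$. Summing the $\beta$-increments gives $\beta_{n}\ge\beta_{0}+n\tau/(\rho\|T_{0}\|)$. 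Summing the $s$-increments then gives
\[
s_{n}=s_{0}+\frac{\tau}{\rho}\sum_{k=0}^{n-1}\beta_{k}\ge s_{0}+\frac{\tau}{\rho}n\beta_{0}+\frac{\tau^{2}}{\rho^{2}\|T_{0}\|}\sum_{k=0}^{n-1}k,
\]
which is the stated quadratic bound.

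For the eigenvalue estimate, note $s_{n}=\mathrm{tr}(T_{n}^{-1})\le\dim E\cdot\lambda_{\max}(T_{n}^{-1})=\dim E/\lambda_{\min}(T_{n})$, so $\lambda_{\min}(T_{n})\le\dim E/s_{n}$. For $n\ge2$ the quadratic term gives $s_{n}\ge c\,n^{2}$ with $c=\tau^{2}/(4\rho^{2}\|T_{0}\|)$, using $n(n-1)\ge n^{2}/2$. For $n=1$ one simply uses $s_{1}\ge s_{0}>0$. Taking $C:=\dim E\cdot\max\{1/c,\,1/s_{0}\}$ covers all $n\ge1$, and this $C$ depends only on $T_{0}$ and $\tau$.

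The only genuinely delicate step is the uniform lower bound on $|\langle e,T_{n}^{-1/2}e\rangle|^{2}$, since it is what turns the additive inverse dynamics into linear growth of $\beta_{n}$. That bound rests entirely on the order bound $T_{n}\le T_{0}$ combined with functional calculus on $T_{n}$. Everything else is algebra.
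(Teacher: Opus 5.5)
Your proof is correct and follows the paper's argument essentially step for step: strict positivity by induction, the inverse $\bigl(I_E-\tau|e\rangle\langle e|\bigr)^{-1}=I_E+\frac{\tau}{\rho}|e\rangle\langle e|$, pairing with $e$ and taking traces for the two increments, then summing and using $\mathrm{tr}(T_n^{-1})\le\dim E/\lambda_{\min}(T_n)$. The differences are small: you get $T_n^{-1/2}\ge\|T_0\|^{-1/2}I_E$ directly from the spectrum of $T_n\le\|T_0\|I_E$ (the paper instead uses operator monotonicity of $t\mapsto t^{-1/2}$ to pass through $T_0^{-1/2}$), and you treat the case $n=1$ of the $C/n^{2}$ bound explicitly, which the paper leaves implicit.
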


\begin{proof}
Since 
\[
I_{E}-\tau\left|e\left\rangle \right\langle e\right|=\rho\left|e\left\rangle \right\langle e\right|+Q\ge\rho I_{E}>0,
\]
it follows inductively that every $T_{n}$ is strictly positive.

Now 
\[
\left(I_{E}-\tau\left|e\left\rangle \right\langle e\right|\right)^{-1}=Q+\rho^{-1}\left|e\left\rangle \right\langle e\right|=I_{E}+\frac{\tau}{\rho}\left|e\left\rangle \right\langle e\right|.
\]
Therefore 
\[
T^{-1}_{n+1}=T^{-1/2}_{n}(I_{E}-\tau\left|e\left\rangle \right\langle e\right|)^{-1}T^{-1/2}_{n}=T^{-1}_{n}+\frac{\tau}{\rho}|T^{-1/2}_{n}e\left\rangle \right\langle T^{-1/2}_{n}e|.
\]

Taking the matrix coefficient against $e$ gives 
\[
\beta_{n+1}-\beta_{n}=\frac{\tau}{\rho}|\langle e,T^{-1/2}_{n}e\rangle|^{2}.
\]
Taking traces gives 
\[
s_{n+1}-s_{n}=\frac{\tau}{\rho}\Vert T^{-1/2}_{n}e\Vert^{2}=\frac{\tau}{\rho}\beta_{n}.
\]

Since $0<T_{n+1}\le T_{n}\le T_{0}$, the function $t\mapsto t^{-1/2}$
being operator monotone decreasing on $\left(0,\infty\right)$ yields
\[
T^{-1/2}_{n}\ge T^{-1/2}_{0}\ge\left\Vert T_{0}\right\Vert ^{-1/2}I_{E}.
\]
Hence 
\[
\langle e,T^{-1/2}_{n}e\rangle\ge\left\Vert T_{0}\right\Vert ^{-1/2},
\]
and so 
\[
\beta_{n+1}-\beta_{n}\ge\frac{\tau}{\rho\left\Vert T_{0}\right\Vert }.
\]
Summing from $0$ to $n-1$ gives 
\[
\beta_{n}\ge\beta_{0}+n\frac{\tau}{\rho\left\Vert T_{0}\right\Vert }.
\]

Next, 
\[
s_{n}=s_{0}+\frac{\tau}{\rho}\sum^{n-1}_{k=0}\beta_{k}\ge s_{0}+\frac{\tau}{\rho}\sum^{n-1}_{k=0}\left(\beta_{0}+k\frac{\tau}{\rho\left\Vert T_{0}\right\Vert }\right),
\]
which gives 
\[
s_{n}\ge s_{0}+\frac{\tau}{\rho}n\beta_{0}+\frac{\tau^{2}}{2\rho^{2}\left\Vert T_{0}\right\Vert }n\left(n-1\right).
\]

Finally, 
\[
\mathrm{tr}\left(T^{-1}_{n}\right)\le\dim E\,\left\Vert T^{-1}_{n}\right\Vert =\frac{\dim E}{\lambda_{\min}\left(T_{n}\right)},
\]
hence 
\[
\lambda_{\min}\left(T_{n}\right)\le\frac{\dim E}{\mathrm{tr}\left(T^{-1}_{n}\right)}=\frac{\dim E}{s_{n}}.
\]
The quadratic lower bound for $s_{n}$ therefore implies 
\[
\lambda_{\min}\left(T_{n}\right)\le\frac{C}{n^{2}}
\]
for some $C>0$ depending only on $T_{0}$ and $\tau$. 
\end{proof}

\prettyref{prop:3-4} shows that the weighted recursion admits a dual
additive description on the inverse side. In particular, the active
block develops quantitative singularity: the trace of $T^{-1}_{n}$
grows at least quadratically, and therefore the smallest eigenvalue
decays at least on the order of $n^{-2}$. This complements the block-decoupling
from \prettyref{prop:3-1}, but it still does not determine the rank
of the limit.

In dimension two, however, the remaining geometry becomes rigid enough
to give a classification.

\section{Classification in Active Dimension 2}\label{sec:4}

When the active block has dimension $2$, the weighted recursion becomes
rigid enough to classify. In this case the orthogonal complement of
the distinguished direction is one-dimensional, so there is no room
for a  higher-rank stationary transverse component. As a result, the
asymptotic behavior is governed by a simple dichotomy: either the
transverse line is already reducing for the initial datum, in which
case it persists unchanged, or the active block is coupled, in which
case the iteration collapses completely.

The results of this section give a description of the limit in the
two-dimensional active case and, through the reduction established
earlier, yield the corresponding classification for the original finite-dimensional
recursion whenever the active block has dimension $2$. This is the
last dimension in which the limiting geometry is rigid; beyond it,
the family of stationary points supported on the transverse subspace
becomes  larger.
\begin{thm}[Complete classification in the weighted two-dimensional case]
\label{thm:4-1} Assume $\dim E=2$, let $T_{0}\in B\left(E\right)_{+}$
be strictly positive, and assume $u_{E}\ne0$. Write 
\[
e:=\frac{u_{E}}{\left\Vert u_{E}\right\Vert },\qquad\tau:=\left\Vert u_{E}\right\Vert ^{2}\in\left(0,1\right),\qquad\rho:=1-\tau\in\left(0,1\right),
\]
and let $f$ be a unit vector spanning $e^{\perp}$. With respect
to $E=\mathbb{C}e\oplus\mathbb{C}f$, write 
\[
T_{0}=\begin{pmatrix}a_{0} & b_{0}\\
\overline{b_{0}} & d_{0}
\end{pmatrix}.
\]
Then the following dichotomy holds. 
\begin{enumerate}
\item If $b_{0}=0$, equivalently if $e^{\perp}$ reduces $T_{0}$, then
$T_{\infty}=0\oplus d_{0}$. 
\item If $b_{0}\ne0$, then $T_{\infty}=0$. 
\end{enumerate}
\end{thm}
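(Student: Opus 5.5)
The plan is to dispose of case (1) immediately and to prove case (2) by contradiction, showing that the off-diagonal entry cannot decay if transverse mass survives.

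\emph{Case (1).} If $b_{0}=0$, then $e^{\perp}=\mathbb{C}f$ reduces $T_{0}$ and $B_{0}=d_{0}$. \prettyref{prop:3-2} then gives $T_{\infty}=0\oplus d_{0}$ directly.

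\emph{Case (2): reduction to one scalar.} Let $b_{0}\neq0$. Write $T_{n}=\begin{pmatrix}a_{n} & b_{n}\\ \overline{b_{n}} & d_{n}\end{pmatrix}$ in the basis $\{e,f\}$. By \prettyref{prop:3-1}, $a_{n}\to0$, $b_{n}\to0$ and $T_{\infty}=0\oplus d_{\infty}$, where $d_{\infty}=\lim d_{n}\ge0$. So it suffices to exclude $d_{\infty}>0$.

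\emph{Explicit recursion for $b_{n}$.} I will use the $2\times2$ square-root formula from \prettyref{exa:2-4}: $T_{n}^{1/2}=(T_{n}+\sigma_{n}I)/t_{n}$, with $\sigma_{n}:=\sqrt{\det T_{n}}$ and $t_{n}:=\sqrt{a_{n}+d_{n}+2\sigma_{n}}$. This gives $\langle e,T_{n}^{1/2}e\rangle=(a_{n}+\sigma_{n})/t_{n}$, and the off-diagonal entry of $T_{n}^{1/2}$ is $b_{n}/t_{n}$, so $\|y_{n}\|=|b_{n}|/t_{n}$. Expanding $\langle e,T_{n+1}f\rangle$ from \prettyref{eq:3-2} then yields
\[
b_{n+1}=b_{n}\Bigl(1-\tau\,\frac{a_{n}+\sigma_{n}}{a_{n}+d_{n}+2\sigma_{n}}\Bigr).
\]
The bracket lies in $[1-\tau,1]=[\rho,1]$, so $b_{n}\neq0$ for all $n$. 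This recursion is the key computation, and I expect the main care to be needed in getting the conjugations right here.

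\emph{Contradiction.} Suppose $d_{\infty}>0$. Then $t_{n}^{2}\ge d_{n}\ge d_{\infty}$, so the bracket is at least $1-\tau(a_{n}+\sigma_{n})/d_{\infty}$. I will show $\sum_{n}(a_{n}+\sigma_{n})<\infty$.
\begin{itemize}
\item By \prettyref{prop:2-3}(5), $\sigma_{n}=\rho^{n/2}\sigma_{0}$, which is summable.
\item From the identity in the proof of \prettyref{prop:3-1}(3), $a_{n}=\rho^{n}a_{0}+\tau\sum_{k<n}\rho^{n-1-k}\|y_{k}\|^{2}$. Summing over $n$ gives $\sum_{n}a_{n}\le(a_{0}+\tau\sum_{k}\|y_{k}\|^{2})/(1-\rho)$, which is finite by \prettyref{prop:3-1}(2).
\end{itemize}
Hence $\prod_{n}\bigl(1-\tau(a_{n}+\sigma_{n})/(a_{n}+d_{n}+2\sigma_{n})\bigr)$ converges to a positive number. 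Each factor is at least $\rho>0$, and all but finitely many are close to $1$. Therefore $|b_{n}|\to|b_{0}|\cdot(\text{positive})>0$. This contradicts $b_{n}\to0$ from \prettyref{prop:3-1}(4). So $d_{\infty}=0$ and $T_{\infty}=0$.
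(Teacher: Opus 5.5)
Your proof is correct. It shares the paper's contradiction skeleton: assume $d_\infty>0$, show via a convergent infinite product that the off-diagonal coupling cannot decay, and contradict \prettyref{prop:3-1}. The bookkeeping is different.

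\emph{The paper's route.} It passes to the normalized variables $\xi_n=b_n/d_n$ and $\zeta_n=\sqrt{\det T_n}/d_n$. It derives separate recursions for $d_n$ and $\xi_n$, and obtains $\xi_{n+1}\ge\xi_n(1-\tau\zeta_n)$. Dividing by $d_n$ cancels the $a_n$-contribution from the multiplicative defect. Hence summability comes from the determinant decay $\det T_n=\rho^n\det T_0$ alone, together with $d_\infty>0$. The contradiction is then with $a_n=d_n(\xi_n^2+\zeta_n^2)\to0$.

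\emph{Your route.} You work with $b_n$ itself. This needs only the single entry computation $b_{n+1}=b_n\bigl(1-\tau(a_n+\sigma_n)/t_n^2\bigr)$, which agrees with the paper's factor since $\rho(a_n+\sigma_n)+d_n+\sigma_n=t_n^2-\tau(a_n+\sigma_n)$. No phase normalization is needed, because the factor is real and lies in $[\rho,1]$. The price is that $a_n$ now appears in the defect. So you need the quantitative $\ell^1$ strengthening of \prettyref{prop:3-1}(3), namely $\sum_n a_n=(a_0+\tau\sum_k\|y_k\|^2)/\tau<\infty$, rather than merely $a_n\to0$. You derive this correctly from \prettyref{prop:3-1}(1)--(2). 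Your bound $t_n^2\ge d_n\ge d_\infty$ uses that $d_n=\langle f,T_nf\rangle$ is nonincreasing, which holds because $T_{n+1}\le T_n$.

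\emph{Comparison.} Your argument is shorter and gives $0<|b_{n+1}|\le|b_n|$ for free. The paper's separation into a scale variable $d_n$ and a shape variable $\xi_n$ isolates the determinant decay as the only driving mechanism. That is a cleaner picture if one later wants rates.
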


\begin{proof}
If $b_{0}=0$, then $e^{\perp}$ reduces $T_{0}$, so $T_{0}=a_{0}\left|e\left\rangle \right\langle e\right|\oplus d_{0}\left|f\left\rangle \right\langle f\right|$.
Since 
\[
I_{E}-\left|u_{E}\left\rangle \right\langle u_{E}\right|=I_{E}-\tau\left|e\left\rangle \right\langle e\right|=\rho\left|e\left\rangle \right\langle e\right|\oplus\left|f\left\rangle \right\langle f\right|,
\]
an induction gives 
\[
T_{n}=\rho^{n}a_{0}\left|e\left\rangle \right\langle e\right|\oplus d_{0}\left|f\left\rangle \right\langle f\right|.
\]
Hence 
\[
T_{\infty}=0\oplus d_{0}.
\]

Assume now that $b_{0}\ne0$. Replacing $f$ by a unimodular multiple,
we may assume that $b_{0}>0$. Then $T_{0}$ is real symmetric in
the basis $\left\{ e,f\right\} $, and since the recursion is defined
using the positive square root and the real diagonal matrix $\mathrm{diag}\left(\rho,1\right)$,
it follows inductively that every $T_{n}$ is real symmetric in this
basis. We claim that then $b_{n}>0$ for every $n\ge0$. Indeed, let
\[
T_{n}=\begin{pmatrix}a_{n} & b_{n}\\
b_{n} & d_{n}
\end{pmatrix},\qquad b_{n}>0.
\]
Since $T_{n}>0$, the square-root formula gives 
\[
T^{1/2}_{n}=\frac{T_{n}+\sqrt{\det T_{n}}\,I}{\sqrt{\mathrm{tr}\left(T_{n}\right)+2\sqrt{\det T_{n}}}}.
\]
Therefore 
\[
T_{n+1}=T^{1/2}_{n}\begin{pmatrix}\rho & 0\\
0 & 1
\end{pmatrix}T^{1/2}_{n},
\]
and a direct computation yields 
\[
b_{n+1}=b_{n}\frac{\rho a_{n}+\rho\sqrt{\det T_{n}}+d_{n}+\sqrt{\det T_{n}}}{\mathrm{tr}\left(T_{n}\right)+2\sqrt{\det T_{n}}}.
\]
The scalar factor is strictly positive, so $b_{n+1}>0$. By induction,
$b_{n}>0$, $n\ge0$. 

Now define 
\[
\xi_{n}:=\frac{b_{n}}{d_{n}},\qquad\zeta_{n}:=\frac{\sqrt{\det T_{n}}}{d_{n}}.
\]
Since $T_{n}>0$, one has $d_{n}>0$, $\xi_{n}>0$, and 
\[
a_{n}=d_{n}\left(\xi^{2}_{n}+\zeta^{2}_{n}\right).
\]
Using the square-root formula again, one obtains 
\[
d_{n+1}=d_{n}\frac{\rho\xi^{2}_{n}+\left(\zeta_{n}+1\right)^{2}}{\xi^{2}_{n}+\left(\zeta_{n}+1\right)^{2}},
\]
and 
\[
\xi_{n+1}=\xi_{n}\frac{\rho\xi^{2}_{n}+\rho\zeta^{2}_{n}+\rho\zeta_{n}+\zeta_{n}+1}{\rho\xi^{2}_{n}+\left(\zeta_{n}+1\right)^{2}}=\xi_{n}\left(1-\tau\frac{\zeta_{n}\left(\zeta_{n}+1\right)}{\rho\xi^{2}_{n}+\left(\zeta_{n}+1\right)^{2}}\right).
\]
In particular, 
\[
0<\xi_{n+1}\le\xi_{n}\qquad\left(n\ge0\right),
\]
so $\left(\xi_{n}\right)$ decreases to a limit $\xi_{\infty}\ge0$.

We now show that $d_{n}\to0$. Suppose, to the contrary, that $d_{\infty}:=\lim_{n\to\infty}d_{n}>0$.
By \prettyref{prop:2-3}, $\det T_{n}=\rho^{n}\det T_{0}$, hence
\[
\zeta_{n}=\frac{\sqrt{\det T_{n}}}{d_{n}}=\frac{\rho^{n/2}\sqrt{\det T_{0}}}{d_{n}}.
\]
Since $d_{n}\to d_{\infty}>0$, it follows that $\sum^{\infty}_{n=0}\zeta_{n}<\infty$.
Also, 
\[
\frac{\zeta_{n}\left(\zeta_{n}+1\right)}{\rho\xi^{2}_{n}+\left(\zeta_{n}+1\right)^{2}}\le\frac{\zeta_{n}\left(\zeta_{n}+1\right)}{\left(\zeta_{n}+1\right)^{2}}\le\zeta_{n},
\]
so $\xi_{n+1}\ge\xi_{n}\left(1-\tau\zeta_{n}\right)$. Since $\sum\zeta_{n}<\infty$,
there exists $N$ such that 
\[
0<1-\tau\zeta_{n}<1\qquad\left(n\ge N\right),
\]
and the infinite product $\prod^{\infty}_{n=N}\left(1-\tau\zeta_{n}\right)$
converges to a positive number. Therefore, for every $m>N$, 
\[
\xi_{m}\ge\xi_{N}\prod^{m-1}_{n=N}\left(1-\tau\zeta_{n}\right),
\]
and hence $\xi_{\infty}>0$. 

On the other hand, \prettyref{prop:3-1} gives $a_{n}\to0$. Since
$a_{n}=d_{n}\left(\xi^{2}_{n}+\zeta^{2}_{n}\right)$, and $d_{n}\to d_{\infty}>0$,
$\xi_{n}\to\xi_{\infty}>0$, this is impossible. Thus $d_{\infty}>0$
cannot occur, and therefore $d_{n}\to0$. 

Finally, \prettyref{prop:3-1} also gives $a_{n}\to0$, $b_{n}\to0$.
Together with $d_{n}\to0$, this yields $T_{n}\to0$. Hence $T_{\infty}=0$.
This proves (2). 
\end{proof}

\begin{cor}
\label{cor:4-2} Let $T_{0}\in B\left(E\right)_{+}$ be strictly positive.
Let $\mathcal{K}$ be the maximal $T_{0}$-reducing subspace contained
in $e^{\perp}$. Assume $\dim\mathcal{K}^{\perp}=2$. Then 
\[
T_{\infty}=T_{0}\restriction_{\mathcal{K}}\oplus0
\]
with respect to $E=\mathcal{K}\oplus\mathcal{K}^{\perp}$.
\end{cor}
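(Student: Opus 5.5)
Since $\mathcal{K}$ reduces $T_0$ and lies in $e^{\perp}$, I would first repeat the argument of the second lemma of \prettyref{sec:2}, now applied to the weighted recursion \prettyref{eq:3-2}. The operator $I_E-\tau|e\rangle\langle e|$ acts as the identity on $\mathcal{K}\subseteq e^{\perp}$ and leaves both $\mathcal{K}$ and $\mathcal{K}^{\perp}$ invariant. An induction then shows that $\mathcal{K}$ reduces every $T_n$ and that $T_n\restriction_{\mathcal{K}}=T_0\restriction_{\mathcal{K}}$. Indeed, if $T_n=T_0\restriction_{\mathcal{K}}\oplus S_n$, then $T_n^{1/2}=(T_0\restriction_{\mathcal{K}})^{1/2}\oplus S_n^{1/2}$, and the conjugation gives $T_{n+1}=T_0\restriction_{\mathcal{K}}\oplus S_{n+1}$. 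Moreover,
\[
S_{n+1}=S_n^{1/2}\left(I_{\mathcal{K}^{\perp}}-\tau|e\rangle\langle e|\right)S_n^{1/2},
\]
where $e\in\mathcal{K}^{\perp}$ because $\mathcal{K}\subseteq e^{\perp}$. It therefore suffices to show $S_n\to0$.

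Next I would check that $S_0:=T_0\restriction_{\mathcal{K}^{\perp}}$ satisfies the hypotheses of \prettyref{thm:4-1} on the two-dimensional space $\mathcal{K}^{\perp}$. The operator $S_0$ is strictly positive as a restriction of the strictly positive $T_0$ to a reducing subspace. The vector $e\in\mathcal{K}^{\perp}$ is a unit vector and $\tau\in(0,1)$, so the recursion for $S_n$ is exactly the weighted recursion of \prettyref{thm:4-1}, with $u_E$ replaced by $\sqrt{\tau}\,e$. Let $f$ be a unit vector spanning $e^{\perp}\cap\mathcal{K}^{\perp}$, and write $b_0=\langle e,S_0f\rangle$.

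The key step is to show $b_0\neq0$, using maximality of $\mathcal{K}$. Suppose $b_0=0$. Then $\mathbb{C}f$ reduces $S_0$, so $f$ is an eigenvector of $S_0$ and hence of $T_0$. The subspace $\mathcal{K}\oplus\mathbb{C}f$ would then be $T_0$-reducing, since each summand is invariant under the selfadjoint $T_0$. It would also be contained in $e^{\perp}$ and strictly larger than $\mathcal{K}$, contradicting maximality. Hence $b_0\ne0$, and part (2) of \prettyref{thm:4-1} gives $S_\infty=0$. Combining with the first step yields $T_\infty=T_0\restriction_{\mathcal{K}}\oplus0$.

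The main obstacle is only bookkeeping in the reduction step. One must confirm that the square root respects the block decomposition and that the compressed defect operator on $\mathcal{K}^{\perp}$ is still $I-\tau|e\rangle\langle e|$ with the same $\tau$. Both follow because $e$ already lies in $\mathcal{K}^{\perp}$.
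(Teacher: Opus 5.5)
Your proposal is correct and follows essentially the same route as the paper: the induction shows $\mathcal{K}$ reduces every $T_n$ with $T_n\restriction_{\mathcal{K}}=T_0\restriction_{\mathcal{K}}$, maximality of $\mathcal{K}$ rules out the transverse line in $\mathcal{K}^{\perp}$ reducing $S_0$, and part (2) of \prettyref{thm:4-1} then gives $S_\infty=0$. You also spell out details the paper leaves implicit, namely that $S_0$ is strictly positive and that $b_0=0$ would make $f$ an eigenvector of $T_0$.
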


\begin{proof}
Since $\mathcal{K}$ reduces $T_{0}$ and $\mathcal{K}\subseteq e^{\perp}$,
the same induction used earlier shows that $\mathcal{K}$ reduces
every $T_{n}$, and 
\[
T_{n}\restriction_{\mathcal{K}}=T_{0}\restriction_{\mathcal{K}}\qquad\left(n\ge0\right).
\]
Hence, with respect to $E=\mathcal{K}\oplus\mathcal{K}^{\perp}$,
one has 
\[
T_{n}=T_{0}\restriction_{\mathcal{K}}\oplus S_{n},
\]
where $\left(S_{n}\right)$ is a weighted recursion on the two-dimensional
space $\mathcal{K}^{\perp}$: 
\[
S_{n+1}=S^{1/2}_{n}\left(I_{\mathcal{K}^{\perp}}-\tau\left|e\left\rangle \right\langle e\right|\right)S^{1/2}_{n}.
\]
Here $e\in\mathcal{K}^{\perp}$, because $\mathcal{K}\subseteq e^{\perp}$.

Now $\dim\mathcal{K}^{\perp}=2$, and $\mathcal{K}^{\perp}\cap e^{\perp}$
is one-dimensional. If this line reduced $S_{0}=T_{0}\restriction_{\mathcal{K}^{\perp}}$,
then its direct sum with $\mathcal{K}$ would be a larger $T_{0}$-reducing
subspace contained in $e^{\perp}$, contradicting the maximality of
$\mathcal{K}$. Therefore the transverse line in $\mathcal{K}^{\perp}$
does not reduce $S_{0}$.

By \prettyref{thm:4-1}, it follows that $S_{\infty}=0$. Thus $T_{\infty}=T_{0}\restriction_{\mathcal{K}}\oplus0$. 
\end{proof}

The two-dimensional classification shows that a  coupled active block
collapses completely when its dimension is two. Beyond dimension two,
however, nonzero limits may survive. The next proposition identifies
the stationary points of the weighted recursion and isolates the precise
obstruction to complete collapse in higher dimension.
\begin{prop}
\label{prop:4-3} Let 
\[
\Phi\left(S\right):=S^{1/2}\left(I_{E}-\tau\left|e\left\rangle \right\langle e\right|\right)S^{1/2},\qquad S\in B\left(E\right)_{+},
\]
where $0<\tau<1$ and $\left\Vert e\right\Vert =1$. For $S\in B\left(E\right)_{+}$,
the following are equivalent: 
\begin{enumerate}
\item $\Phi\left(S\right)=S$. 
\item $S^{1/2}e=0$. 
\item $Se=0$. 
\item $\mathrm{ran}\left(S\right)\subseteq e^{\perp}$. 
\end{enumerate}
In particular, every positive operator supported on $e^{\perp}$ is
a stationary point of the weighted recursion.

Consequently, the limit $T_{\infty}$ of the weighted recursion is
always a stationary point, and therefore 
\[
T_{\infty}e=0,\qquad\mathrm{ran}\left(T_{\infty}\right)\subseteq e^{\perp}.
\]
\end{prop}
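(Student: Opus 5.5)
The plan is to prove the cycle of implications $(2)\Rightarrow(1)\Rightarrow(2)$, together with $(2)\Leftrightarrow(3)\Leftrightarrow(4)$, and then to deduce the statement about $T_{\infty}$ by continuity.

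The equivalences $(2)\Leftrightarrow(3)\Leftrightarrow(4)$ are elementary.
\begin{itemize}
\item Since $\ker S^{1/2}=\ker S$ for $S\ge0$, condition (2) is the same as (3).
\item Since $S$ is selfadjoint, $\mathrm{ran}(S)^{\perp}=\ker S$. Hence $\mathrm{ran}(S)\subseteq e^{\perp}$ holds iff $e\perp\mathrm{ran}(S)$, iff $e\in\ker S$. This gives $(3)\Leftrightarrow(4)$.
\end{itemize}

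For $(2)\Rightarrow(1)$, write
\[
\Phi(S)=S-\tau\,|S^{1/2}e\rangle\langle S^{1/2}e|,
\]
which follows by expanding $S^{1/2}(I_{E}-\tau|e\rangle\langle e|)S^{1/2}$. If $S^{1/2}e=0$, the rank-one term vanishes and $\Phi(S)=S$.

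For $(1)\Rightarrow(2)$, the same identity turns $\Phi(S)=S$ into $\tau|S^{1/2}e\rangle\langle S^{1/2}e|=0$. Since $\tau>0$, and the trace of this rank-one operator is $\tau\|S^{1/2}e\|^{2}$, we get $S^{1/2}e=0$. This direction is the only place where $\tau>0$ is used.

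For the consequence, recall from Section~\ref{sec:2} that $T_{n}\to T_{\infty}$ in norm, since $E$ is finite-dimensional and Vigier's theorem applies. We want $T_{\infty}$ to be a fixed point, so we need $\Phi(T_{n})\to\Phi(T_{\infty})$.
\begin{itemize}
\item The map $S\mapsto S^{1/2}$ is norm-continuous on $B(E)_{+}$; for example, $\|S^{1/2}-S'^{1/2}\|\le\|S-S'\|^{1/2}$, or one can use continuity of the continuous functional calculus.
\item Hence $\Phi$ is norm-continuous, and $T_{n+1}=\Phi(T_{n})\to\Phi(T_{\infty})$.
\item Since also $T_{n+1}\to T_{\infty}$, we conclude $\Phi(T_{\infty})=T_{\infty}$.
\end{itemize}
The equivalence $(1)\Leftrightarrow(3)\Leftrightarrow(4)$ then gives $T_{\infty}e=0$ and $\mathrm{ran}(T_{\infty})\subseteq e^{\perp}$. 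This recovers Proposition~\ref{prop:3-1}(4) by a different route.

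Nothing here is a genuine obstacle. The only step needing care is the continuity of the square root, which is standard in finite dimension and can be justified by the operator H\"older-type estimate above. The "in particular" statement is just $(4)\Rightarrow(1)$.
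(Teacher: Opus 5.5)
Your proposal is correct and follows the paper's proof: the identity $\Phi(S)=S-\tau\,|S^{1/2}e\rangle\langle S^{1/2}e|$ gives (1)$\Leftrightarrow$(2), $\ker S^{1/2}=\ker S$ and selfadjointness give (2)$\Leftrightarrow$(3)$\Leftrightarrow$(4), and norm continuity of the square root passes the fixed-point equation to $T_{\infty}$. Your extra details, namely the trace argument that uses $\tau>0$ and the estimate $\|S^{1/2}-S'^{1/2}\|\le\|S-S'\|^{1/2}$, only make explicit steps the paper leaves implicit.
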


\begin{proof}
For $S\in B\left(E\right)_{+}$, one has 
\[
\Phi\left(S\right)=S^{1/2}\left(I_{E}-\tau\left|e\left\rangle \right\langle e\right|\right)S^{1/2}=S-\tau\left|S^{1/2}e\left\rangle \right\langle S^{1/2}e\right|.
\]
Hence 
\[
\Phi\left(S\right)=S\iff|S^{1/2}e\left\rangle \right\langle S^{1/2}e|=0\iff S^{1/2}e=0.
\]
Thus (1) and (2) are equivalent.

Since $S\ge0$, one has 
\[
\ker(S^{1/2})=\ker\left(S\right),
\]
so (2) and (3) are equivalent.

Next, 
\[
Se=0\iff\left\langle Sx,e\right\rangle =0\quad\text{for all }x\in E\iff\left\langle x,Se\right\rangle =0\quad\text{for all }x\in E,
\]
and because $S=S^{*}$, this is equivalent to 
\[
\left\langle Sx,e\right\rangle =0\quad\text{for all }x\in E,
\]
that is, 
\[
\mathrm{ran}\left(S\right)\subseteq e^{\perp}.
\]
So (3) and (4) are equivalent.

This proves the equivalence of (1)-(4). The statement about stationary
points supported on $e^{\perp}$ is exactly (4)$\Rightarrow$(1).

Finally, since $T_{n+1}=\Phi\left(T_{n}\right)$ and $T_{n}\to T_{\infty}$
in norm, continuity of the map $S\mapsto S^{1/2}$ on $B\left(E\right)_{+}$
shows that $\Phi\left(T_{\infty}\right)=T_{\infty}$. Hence $T_{\infty}$
is a stationary point, and the last assertion follows from the equivalence
already proved. 
\end{proof}

\begin{rem}
\prettyref{prop:4-3} shows that the higher-dimensional problem is
not merely a larger version of the two-dimensional one. In dimension
two, the space $e^{\perp}$ is one-dimensional, so the cone of stationary
points supported on $e^{\perp}$ is itself one-dimensional. This rigidity
is  what makes complete classification possible.

When $\dim E\ge3$, the cone of stationary points supported on $e^{\perp}$
has  higher-dimensional geometry. Thus nonzero limits are no longer
excluded by fixed-point considerations alone, and the problem becomes
one of determining which transverse stationary operator is selected
by the nonlinear iteration. 

\bibliographystyle{plain}
\bibliography{ref}

\begin{thebibliography}{10}

\bibitem{MR242573}
W.~N. Anderson, Jr. and R.~J. Duffin.
\newblock Series and parallel addition of matrices.
\newblock {\em J. Math. Anal. Appl.}, 26:576--594, 1969.

\bibitem{MR356949}
W.~N. Anderson, Jr. and G.~E. Trapp.
\newblock Shorted operators. {II}.
\newblock {\em SIAM J. Appl. Math.}, 28:60--71, 1975.

\bibitem{MR287970}
William~N. Anderson, Jr.
\newblock Shorted operators.
\newblock {\em SIAM J. Appl. Math.}, 20:520--525, 1971.

\bibitem{MR2234254}
Jorge Antezana, Gustavo Corach, and Demetrio Stojanoff.
\newblock Spectral shorted operators.
\newblock {\em Integral Equations Operator Theory}, 55(2):169--188, 2006.

\bibitem{MR2284176}
Rajendra Bhatia.
\newblock {\em Positive Definite Matrices}.
\newblock Princeton Series in Applied Mathematics. Princeton University Press,
  Princeton, NJ, 2007.

\bibitem{MR2573240}
Brian Blackwood, S.~K. Jain, K.~M. Prasad, and Ashish~K. Srivastava.
\newblock Shorted operators relative to a partial order in a regular ring.
\newblock {\em Comm. Algebra}, 37(11):4141--4152, 2009.

\bibitem{MR2580440}
A.~B\"{o}ttcher and I.~M. Spitkovsky.
\newblock A gentle guide to the basics of two projections theory.
\newblock {\em Linear Algebra Appl.}, 432(6):1412--1459, 2010.

\bibitem{MR938493}
C.~A. Butler and T.~D. Morley.
\newblock A note on the shorted operator.
\newblock {\em SIAM J. Matrix Anal. Appl.}, 9(2):147--155, 1988.

\bibitem{MR852902}
Hans Goller.
\newblock Shorted operators and rank decomposition matrices.
\newblock {\em Linear Algebra Appl.}, 81:207--236, 1986.

\bibitem{MR24574}
M.~Krein.
\newblock The theory of self-adjoint extensions of semi-bounded {H}ermitian
  transformations and its applications. {I}.
\newblock {\em Rec. Math. [Mat. Sbornik] N.S.}, 20(62):431--495, 1947.

\bibitem{MR563399}
Fumio Kubo and Tsuyoshi Ando.
\newblock Means of positive linear operators.
\newblock {\em Math. Ann.}, 246(3):205--224, 1979/80.

\bibitem{MR1465881}
Volker Metz.
\newblock Shorted operators: an application in potential theory.
\newblock {\em Linear Algebra Appl.}, 264:439--455, 1997.

\bibitem{MR401352}
Katsuyoshi Nishio and Tsuyoshi Ando.
\newblock Characterizations of operations derived from network connections.
\newblock {\em J. Math. Anal. Appl.}, 53(3):539--549, 1976.

\bibitem{MR3145756}
Evgeniy Pustylnik and Simeon Reich.
\newblock Infinite products of arbitrary operators and intersections of
  subspaces in {H}ilbert space.
\newblock {\em J. Approx. Theory}, 178:91--102, 2014.

\bibitem{MR2903120}
Evgeniy Pustylnik, Simeon Reich, and Alexander~J. Zaslavski.
\newblock Convergence of non-periodic infinite products of orthogonal
  projections and nonexpansive operators in {H}ilbert space.
\newblock {\em J. Approx. Theory}, 164(5):611--624, 2012.

\bibitem{MR493419}
Michael Reed and Barry Simon.
\newblock {\em Methods of modern mathematical physics. {I}. {F}unctional
  analysis}.
\newblock Academic Press, New York-London, 1972.

\bibitem{MR4310540}
Simeon Reich and Rafa\l Zalas.
\newblock Error bounds for the method of simultaneous projections with
  infinitely many subspaces.
\newblock {\em J. Approx. Theory}, 272:Paper No. 105648, 24, 2021.

\bibitem{tian2025alt}
James Tian.
\newblock Alternating weighted residual flows and the non-commutative gap.
\newblock {\em arXiv.2512.06968}, 2025.

\bibitem{tian2025wr}
James Tian.
\newblock Residual-weighted decomposition of positive operators.
\newblock {\em arXiv.2512.00167}, 2025.

\end{thebibliography}
\end{rem}

\end{document}